\newtheorem{theorem}{Theorem}
\newtheorem{definition}[theorem]{Definition}
\newtheorem{example}[theorem]{Example}
\newtheorem{lemma}[theorem]{Lemma}
\newtheorem{proposition}[theorem]{Proposition}
\newenvironment{proof}[1][Proof]{\noindent \textbf{#1.} }{\  \rule{0.5em}{0.5em}}
\begin{document}

\title{N-Legendre and N-Slant Curves in the Unit Tangent Bundle of Minkowski
Surfaces}
\author{\textsc{M.Bekar}, \textsc{F.Hathout and Y.Yayli}}
\date{}
\maketitle

\begin{abstract}
Let $(\mathbb{M}_{1}^{2},g)$ be a Minkowski surface and $\left( T_{1}\mathbb{%
M}_{1}^{2},g_{1}\right) $ its unit tangent bundle endowed with the
pseudo-Riemannian induced Sasaki metric. We extend in this paper the study
of the N-Legendre and N-slant curves which the inner product of normal
vector and Reeb vector is zero and nonzero constant respectively in $\left(
T_{1}\mathbb{M}_{1}^{2},g_{1}\right) ,$ given in \cite{hmy}, to the
Minkowski context and several important characterizations of these curves
are given.\newline
\textbf{Key words:} N-Legendre, N-slant, Minkowski surface, Unit tangent
bundle, Sasaki metric and Sectional curvature. \newline
\textbf{2010 AMS Mathematics Subject Classification:} 53B30, 53C40, 53C50
\end{abstract}

\section{Introduction}

As it is well known, a N-slant curve is a class of curves which the angle
between their tangent vector field and Reeb vector field $\xi $ is constant
of a given contact metric manifold $\left( M,g,\phi ,\xi ,\eta \right) $. A
particular case can be given by taking the angle as $\frac{\pi }{2}$ which
result to the class called N-legendre curves. The class of N-slant curves is
naturally an extension of slant curves studied by many authors in(\cite{cc},%
\cite{cil},\cite{ft},\cite{zl},$\cdots )$\textbf{.}

The unit tangent bundle $T_{1}M$ of some surfaces $M$ is a hypersurface of
tangent bundle $TM$ and $3$-dimensional almost contact metric structure, see
(\cite{b},\cite{s}).

In (\cite{hmy}), the authors studied the characterization of N-legendre and
N-slant curves in a non geodesic Euclidian case. How about Minkowski context?

Let $\mathbb{M}_{1}^{2}$ be a Minkowski surface endowed with
pseudo-Riemannian metric $g.$ The unit tangent bundle $T_{1}\mathbb{M}%
_{1}^{2}$ is a pseudo-Riemannian $3$-dimensional manifold endowed with an
almost contact metric structure denoted by $\left( T_{1}\mathbb{M}%
_{1}^{2},g_{1},\phi ,\xi ,\eta \right) .$

The purpose of the paper is to generalize the result of \cite{hmy} to
Minkowski space. In fact, in $\left( T_{1}\mathbb{M}_{1}^{2},g_{1},\phi ,\xi
,\eta \right) $ of Minkowski surface $\mathbb{M}_{1}^{2},$ we derive some
characterization theorems about $\tilde{N}$-legendre and $\tilde{N}$-slant
classes of curves in almost contact metric structure $\left( T_{1}\mathbb{M}%
_{1}^{2},g_{1},\phi ,\xi ,\eta \right) $ and some examples are given
especially in the de Sitter $\mathbb{S}_{1}^{2}$, anti de Sitter $\mathbb{H}%
_{1}^{2}$ and flat minkowski space cases.

\section{Preliminaries}

Let $\mathbb{M}_{1}^{2}$ be a three-dimensional Minkowski space endowed with
the standard metric given by 
\[
g=dx_{1}^{2}+dx_{2}^{2}-dx_{3}^{2}
\]%
where $(x_{1},x_{2},x_{3})$ is a standard rectangular coordinate system of $%
\mathbb{M}_{1}^{2}$. An arbitrary vector $x=(x_{1},x_{2},x_{3})$ in $\mathbb{%
M}_{1}^{2}$ can be one of the following three Lorentzian causal characters;
it can be spacelike if $g(x,x)>0$ or timelike if $g(x,x)<0$ or null
(lightlike) if $g(x,x)=0.$ A curve $\alpha :I\subset \mathbb{R}\rightarrow 
\mathbb{M}_{1}^{2}$ with arc-length parameter $s$ can be a locally
spacelike, timelike or null (lightlike), if all of its velocity vectors $%
\alpha ^{\prime }(s)$ are respectively spacelike, timelike or null
(lightlike). A spacelike or timelike curve $\alpha (s)$ is a non null unit
speed curve satisfying $g(\alpha ^{\prime }(s),\alpha ^{\prime }(s))=1$ and $%
-1$ respectively.

The Minkowski wedge product of two vectors in $\mathbb{M}_{1}^{2}$ is
defined by%
\[
x\times _{1}y=\left \vert 
\begin{array}{ccc}
\overrightarrow{i} & \overrightarrow{j} & -\overrightarrow{k} \\ 
x_{1} & x_{2} & x_{3} \\ 
y_{1} & y_{2} & y_{3}%
\end{array}%
\right \vert 
\]%
where $x=(x_{1},x_{2},x_{3})$ and $y=(y_{1},y_{2},y_{3})$ are any vectors in 
$M.$ The norm of $x$ is denoted by $\left \Vert x\right \Vert =\sqrt{g(x,x)}$
and two vectors $x$ and $y$ in $\mathbb{M}_{1}^{2}$ are said to be
orthogonal, if $g(x,y)=0.$ The de Sitter and anti de Sitter spaces are
respectively given by%
\begin{eqnarray*}
\mathbb{S}_{1}^{2}(\mathbf{r}) &=&\{x=(x_{1},x_{2},x_{3})\in \mathbb{M}%
_{1}^{2}\mid g(x,x)=\mathbf{r}^{2}\} \mathnormal{\ Lorentzian\ sphere} \\
\mathbb{H}_{0}^{2}(\mathbf{r}) &=&\{x=(x_{1},x_{2},x_{3})\in \mathbb{M}%
_{1}^{2}\mid g(x,x)=-\mathbf{r}^{2}\} \mathnormal{\ Hyperbolic\ sphere}.
\end{eqnarray*}%
Let $\alpha :I\subset \mathbb{R}\rightarrow \mathbb{M}_{1}^{2}$ be a non
null curve with arc-length parameter $s$. The moving Frenet-Serret formula
in $\mathbb{M}_{1}^{2}$ is given by%
\begin{equation}
\left( 
\begin{array}{c}
T^{\prime } \\ 
N^{\prime } \\ 
B^{\prime }%
\end{array}%
\right) =\left( 
\begin{array}{ccc}
0 & \varkappa  & 0 \\ 
-\varepsilon _{1}\varepsilon _{2}\varkappa  & 0 & \tau  \\ 
0 & -\varepsilon _{2}\varepsilon _{3}\tau  & 0%
\end{array}%
\right) \left( 
\begin{array}{c}
T \\ 
N \\ 
B%
\end{array}%
\right)   \label{1}
\end{equation}%
where $\tau (s)$ is the torsion of the curve $\alpha $ at $s$ and $%
g(T,T)=\varepsilon _{1}=\pm 1,$ $g(N,N)=\varepsilon _{2}=\pm 1$ and $%
g(B,B)=-\varepsilon _{1}\varepsilon _{2}.$ We write for this moving frame%
\[
T\times _{1}N=\varepsilon _{1}\varepsilon _{2}B,\ N\times _{1}B=-\varepsilon
_{1}T\mathnormal{\ and\ }B\times _{1}T=-\varepsilon _{2}N
\]%
If $\mathbb{M}_{1}^{1}$ is Minkowski surface endowed with the standard
metric given by 
\[
g=dx_{1}^{2}-dx_{2}^{2}
\]%
then the moving Frenet-Serret formula in $\mathbb{M}_{1}^{1}$ of\ the non
null curve $\alpha $ with arc-length parameter $s$ is%
\begin{equation}
\left( 
\begin{array}{c}
T^{\prime } \\ 
N^{\prime }%
\end{array}%
\right) =\left( 
\begin{array}{cc}
0 & \varepsilon _{2}\varkappa  \\ 
-\varepsilon _{1}\varkappa  & 0%
\end{array}%
\right) \left( 
\begin{array}{c}
T \\ 
N%
\end{array}%
\right)   \label{2}
\end{equation}%
and $g(T,T)=\varepsilon _{1}=\pm 1,$ $g(N,N)=\varepsilon _{2}=\pm 1$.

\subsection{Unit tangent bundle}

Let $(M,g)$ be a $n$-pseudo-Riemannian manifold, $\nabla $ the Levi-Civita
connection of $g$ and $TM$ its tangent bundle. The tangent space $T_{p}TM$
of $TM$ at $p=(x,u)$ splits into the horizontal and vertical subspaces $H_{p}
$ and $V_{p}$ as $T_{p}TM=H_{p}\oplus V_{p}$ with respect to $\nabla .$ The
horizontal (resp. vertical) lift of the vector field $X$ on $M$ to the point 
$p$ in $TM$ is the unique vector $X^{h}\in H_{p}$ (resp. $X^{v}\in V_{p})$
given by $\pi ^{\ast }(X^{h})=X$ (resp. $X^{v}(df)=X(f)$ for all smooth
functions $f$ on $M)$. The map $X\rightarrow X^{h}$ (resp. $X\rightarrow
X^{v})$ is an isomorphism between the vector spaces $T_{x}M$ and $H_{(x,u)}$
(resp. $V_{(p,u)})$. Each tangent vector $\tilde{X}\in T_{(p,u)}(TM)$ can be
written in the form $\tilde{X}=X^{h}+Y^{v}$. The unit tangent bundle $T_{1}M$
of a $n$ pseudo-Riemannian manifold $(M,g)$ is the $(2n-1)$-dimensional
hypersurface 
\[
T_{1}M=\{p=(x,u)\in TM\mid g_{x}(u,u)=\pm 1\}
\]%
where the canonical vertical vector field $\mathcal{U=}u^{v}$ is normal to $%
T_{1}M$. The tangential lift of a vector field $X$ on $M$ is a vector field $%
X^{t}$ tangent to $T_{1}M$ defined by%
\[
X^{t}=X^{v}-g(X,u)u.
\]%
Any vector $\tilde{X}\in T_{(x,u)}(T_{1}M)$ decompose in%
\begin{equation}
\tilde{X}=X^{h}+Y^{t}  \label{3}
\end{equation}%
where $X,Y\in \mathcal{X}(M)$ are uniquely determined vectors.

The induced pseudo-Riemannian Sasaki metric $g_{1}^{S}$\ on $T_{1}M$ is
uniquely determined by 
\begin{equation}
\left \{ 
\begin{array}{l}
i)\ g_{1}^{s}(X^{h},Y^{h})=g(X,Y) \\ 
ii)\ g_{1}^{s}(X^{h},Y^{t})=0 \\ 
ii)\ g_{1}^{s}(X^{t},Y^{t})=g(X,Y)-g(X,u)g(Y,u)%
\end{array}%
\right.   \label{3.1}
\end{equation}%
Its Levi-Civita connection $\nabla _{1}$ is given by%
\begin{equation}
\left \{ 
\begin{array}{l}
1.\; \nabla _{1}\ _{X^{h}}Y^{h}=(\nabla _{X}Y)^{h}-\tfrac{1}{2}(R(X,Y)u)^{t}
\\ 
2.\; \nabla _{1}\ _{X^{h}}Y^{t}=(\nabla _{X}Y)^{t}+\tfrac{1}{2}(R(u,Y)X)^{h}
\\ 
3.\; \nabla _{1}\ _{X^{t}}Y^{h}=\tfrac{1}{2}(R(u,X)Y)^{h} \\ 
4.\; \nabla _{1}\ _{X^{t}}Y^{t}=-g(Y,u)X^{t}%
\end{array}%
\right.   \label{3.5}
\end{equation}%
for all vector fields $X$ and $Y$ on $(M,g),$ see \cite{tt} and \cite{s}.

The unit tangent bundle $T_{1}M$ have a contact structure $(g_{1}^{s},\phi
^{\prime },\xi ^{\prime },\eta ^{\prime })$ and\textbf{\ }at any point $(x,u)
$ we have%
\begin{equation}
\begin{array}{c}
\xi ^{\prime }=u^{h},\  \  \  \  \  \eta ^{\prime }(\tilde{X})=g_{1}^{s}(\tilde{X}%
,\xi ^{\prime }) \\ 
\phi ^{\prime }(\tilde{X},\tilde{Y})=g_{1}^{s}(\tilde{X},\phi ^{\prime }%
\tilde{Y})=2d\eta ^{\prime }(\tilde{X},\tilde{Y}) \\ 
\phi ^{\prime }(X^{h})=X^{t}\text{,\  \  \  \  \ }\phi ^{\prime
}(X^{t})=-X^{h}+\eta ^{\prime }(X^{h})\xi ^{\prime }%
\end{array}%
\end{equation}%
where $X\in \mathcal{X}(M)$ and $\tilde{X},\tilde{Y}\in \mathcal{X}(TM),$
see \cite{b},\cite{y}.

Then, $T_{1}M$ is a contact metric pseudo-Riemannian manifold with a contact
metric structure denoted by $(g_{1},\phi ,\xi ,\eta )$ such that%
\begin{equation}
\eta =\frac{1}{2}\eta ^{\prime },\  \  \  \xi =\frac{1}{2}\xi ^{\prime },\  \  \
\phi =\phi ^{\prime }\text{,\  \  \ }g_{1}=\frac{1}{4}g_{1}^{s}  \label{4.5}
\end{equation}

\section{N-legendre and N-slant curves in $T_{1}\mathbb{M}_{1}^{2}$}

We suppose in this section that $(\mathbb{M}_{1}^{2},g)$ is a Minkowski
surface endowed with pseudo-Riemannian metric $g$, $\mathbb{\sigma }(t)$ its
sectional curvature, $\gamma :I\subset \mathbb{R}\rightarrow \mathbb{M}%
_{1}^{2}$ is a curve and $\tilde{\gamma}\left( t\right) =\left( \gamma
\left( t\right) ,X\left( t\right) \right) $ is a curve in 3-dimensional
contact metric pseudo-Riemannian manifold $\left( T_{1}\mathbb{M}%
_{1}^{2},g_{1},\phi ,\xi ,\eta \right) $ given in formula (\ref{4.5}). The
Frenet frame apparatus of $\tilde{\gamma}$ is $\left( \tilde{T},\tilde{N},%
\tilde{B},\tilde{\kappa},\tilde{\tau}\right) $ satisfies the formula (\ref{1}%
).

\bigskip

A general definition of the curves N-Legendre and N-slant in almost contact
metric structure manifold $\left( M,g,\phi ,\xi ,\eta \right) $ can be given
in the following definitions;

\begin{definition}[\protect \cite{cil},\protect \cite{zl}]
Let $\left( M,g,\phi ,\xi ,\eta \right) $ be an almost contact metric
structure. The curve $\gamma $ is a slant curve if the tangent vector field $%
T$ of $\gamma $ and $\xi $ satisfy%
\[
g(T,\xi )=c,\text{ nonzero constant} 
\]%
If $c=0,$ the curve $\gamma $ is called legendre curve.
\end{definition}

\begin{definition}[\protect \cite{hmy}]
Let $\gamma $ be a curve in an almost contact metric structure manifold $%
\left( M,g,\phi ,\xi ,\eta \right) .$ The curve is called a N-legendre
(resp. N-slant) curve if the normal vector field $N$ of $\gamma $ and $\xi $
satisfy%
\[
g(N,\xi )=0\  \ (\text{resp. }g(N,\xi )=c\  \text{nonzero constant}) 
\]
\end{definition}

\bigskip

Let $\tilde{\gamma}\left( t\right) $ be a non null (i.e., spacelike or
timelike) in the unit tangent bundle $\left( T_{1}\mathbb{M}%
_{1}^{2},g_{1},\phi ,\xi ,\eta \right) $ of Minkowski surface $\mathbb{M}%
_{1}^{2},$ parameterized by the arc length with Frenet frame apparatus $%
\left( \tilde{T},\tilde{N},\tilde{B},\tilde{\kappa},\tilde{\tau}\right) $,
then%
\begin{eqnarray}
\tilde{T}(t) &=&\frac{d\gamma ^{i}}{dt}\partial _{x^{i}}+\frac{dX^{i}}{dt}%
\partial _{u^{i}}  \label{6} \\
&=&(\frac{d\gamma ^{i}}{dt}(\partial _{x^{i}})^{h}+\frac{dX^{i}}{dt}+\frac{%
d\gamma ^{i}}{dt}X^{k}\Gamma _{jk}^{i})\partial _{u^{i}})(\gamma (t)) 
\nonumber \\
&=&(E^{h}+(\nabla _{E}X)^{v})(\gamma (t))  \nonumber
\end{eqnarray}%
where $E=\gamma ^{\prime }(t).$

Using the formulas (\ref{3.1}) and (\ref{4.5}), the Lorentzian angle $\theta 
$ between $\tilde{T}$ and $\xi =2X^{h}$ in $(T_{1}\mathbb{M}_{1}^{2},g_{1})$
is given by%
\begin{equation}
g_{1}(\tilde{T},\xi )=\tfrac{1}{2}g(E,X)=L(\theta )  \label{6.5}
\end{equation}%
where $L(\theta )$ is%
\begin{eqnarray}
&&\text{i.\ }\cos \theta \text{ if }\tilde{T}\text{ and }\xi \text{ are
spacelike vectors that span a spacelike vector subspace,}  \nonumber \\
&&\text{ii.\ }\cosh \theta \text{ if }\tilde{T}\text{ and }\xi \text{ are
spacelike vectors that span a timelike vector subspace,}  \nonumber \\
&&\text{iii.\ }\sinh \theta \text{ if }\tilde{T}\text{ and }\xi \text{ have
a different casual character.}  \label{7}
\end{eqnarray}

Differentiating the Equation (\ref{6.5}) with respect to $s$ and using the
formulas (\ref{3.1}),(\ref{4.5}) and (\ref{6}), we have%
\begin{eqnarray*}
\frac{d}{ds}g_{1}(\tilde{T},\xi ) &=&g_{1}(\nabla _{1}\ _{\tilde{T}(t)}%
\tilde{T},\xi )+g_{1}(\tilde{T},\nabla _{1}\ _{\tilde{T}(t)}\xi ) \\
&=&\tilde{\kappa}g_{1}(\tilde{N},\xi )+g_{1}(\tilde{T}(t),(2\nabla
_{E}X+R(u,\nabla _{E}X)u)^{h}+(R(E,u)u)^{t}) \\
&=&\tilde{\kappa}g_{1}(\tilde{N},\xi )+8g(E,\nabla _{E}X)-8R(E,X,X,\nabla
_{E}X) \\
&=&\theta ^{\prime }L^{\prime }(\theta )
\end{eqnarray*}%
and 
\begin{equation}
g_{1}(\tilde{N},\xi )=\frac{1}{\tilde{\kappa}}(8R(E,X,X,\nabla
_{E}X)-8g(E,\nabla _{E}X))-\frac{\theta ^{\prime }L^{\prime }(\theta )}{%
\tilde{\kappa}}  \label{7,5}
\end{equation}%
where $\xi (t)=2X^{h}$ and $R$ is curvature tensor of $\mathbb{M}_{1}^{2}.$

Let $(T,N)$ be the Frenet frame on $\gamma $ in $\mathbb{M}_{1}^{2}$ given
in (\ref{2}). The unit vector $X$ can be expressed from (\ref{6.5}), by%
\begin{equation}
X=\frac{2}{r}L(\theta )\ T+2\beta N  \label{8}
\end{equation}%
where $\beta $ is a $C^{\infty }$ function obtained from%
\[
\frac{4\varepsilon _{1}}{r^{2}}L^{2}(\theta )+4\varepsilon _{2}\beta
^{2}=\varepsilon _{X}
\]%
and%
\begin{equation}
\beta =\pm \frac{1}{r}\sqrt{\varepsilon _{X}\varepsilon _{2}(\tfrac{r}{2}%
)^{2}-\varepsilon _{1}\varepsilon _{2}L^{2}(\theta )}  \label{8,5}
\end{equation}%
where $\varepsilon _{X}=g(X,X)=\pm 1.$

The differentiation of the Equation (\ref{8}) with respect to $s$, gives%
\begin{eqnarray}
\nabla _{E}X &=&2(\frac{L(\theta )}{r})^{\prime }T+2\kappa L(\theta
)N+2\beta ^{\prime }N+2\varepsilon _{2}r\beta \kappa T  \label{9} \\
&=&2\left( (\frac{L(\theta )}{r})^{\prime }+\varepsilon _{2}r\beta \kappa
\right) T+2(\kappa L(\theta )+\beta ^{\prime })N  \nonumber
\end{eqnarray}%
From the orthogonality of the vectors $X$ and $\nabla _{E}X$ (i.e., $%
g(X,\nabla _{E}X)=0$) and using (\ref{6.5}) and (\ref{9}), the vector $E$ is
given by%
\begin{equation}
E=2L(\theta )\ X+\frac{2r}{\left \Vert \nabla _{E}X\right \Vert }\left( (\frac{%
L(\theta )}{r})^{\prime }+\varepsilon _{2}r\beta \kappa \right) \nabla _{E}X
\label{10}
\end{equation}%
It follows that 
\begin{eqnarray}
R(E,X,X,\nabla _{E}X) &=&2r\left( (\frac{L(\theta )}{r})^{\prime
}+\varepsilon _{2}r\beta \kappa \right) \frac{R(\nabla _{E}X,X,X,\nabla
_{E}X)}{\left \Vert \nabla _{E}X\right \Vert _{1}^{2}}  \label{10,5} \\
&=&2r\left( (\frac{L(\theta )}{r})^{\prime }+\varepsilon _{2}r\beta \kappa
\right) \sigma \mathbb{(}s\mathbb{)}  \nonumber
\end{eqnarray}%
where $\mathbb{\sigma }(s)$ is the sectional curvature of $\mathbb{M}%
_{1}^{2}.$

Substituting (\ref{8,5}),(\ref{9}),(\ref{10} and (\ref{10,5}) into (\ref{7,5}%
), we have%
\begin{equation}
g_{1}(\tilde{N},\xi )=16r\frac{(1-\mathbb{\sigma }(s))}{\tilde{\kappa}}%
\left( (\frac{L(\theta )}{r})^{\prime }\pm \varepsilon _{2}r\kappa \sqrt{%
\varepsilon _{X}\varepsilon _{2}(\tfrac{r}{2})^{2}-\varepsilon
_{1}\varepsilon _{2}L^{2}(\theta )}\right) -\frac{\theta ^{\prime }L^{\prime
}(\theta )}{\tilde{\kappa}}  \label{11}
\end{equation}

\section{Geometric consequences}

\begin{proposition}
Let $T_{1}\mathbb{S}_{1}^{2}$ be a unit tangent bundle of a the de Sitter
space $\mathbb{S}_{1}^{2}.$ All legendre and slant non geodesic curves with
any casual characterization are $\tilde{N}$-legendre curves.
\end{proposition}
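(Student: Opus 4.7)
The plan is to reduce the question to the already-derived master formula (\ref{11}) for $g_1(\tilde{N},\xi)$ and to check that both summands on its right-hand side vanish under the hypotheses. Since (\ref{11}) was obtained with no assumption on the causal character of $\tilde{T}$ or $\xi$ (all sign conventions $\varepsilon_1, \varepsilon_2, \varepsilon_X$ are baked in), showing that each summand is zero will cover every causal type simultaneously.

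First I would use the geometry of the ambient surface: the de Sitter surface $\mathbb{S}_1^2$, taken with unit radius, has constant sectional curvature $\sigma \equiv 1$. Consequently the prefactor $(1-\sigma(s))$ in (\ref{11}) is identically zero, killing the entire first bracket together with the $\pm$ ambiguity inherited from $\beta$ in (\ref{8,5}). This step is completely independent of the curve $\tilde{\gamma}$ and relies only on the ambient manifold being $\mathbb{S}_1^2$.

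Second, I would invoke the slant (respectively Legendre) hypothesis, which by Definition~2 means $g_1(\tilde{T},\xi)$ is a constant $c$ (respectively $0$). Combined with (\ref{6.5}) this forces $L(\theta)$ to be constant along $\tilde{\gamma}$, so $\theta' L'(\theta) = \tfrac{d}{ds} L(\theta) = 0$, and the second term of (\ref{11}) vanishes as well. The non-geodesic assumption enters only to ensure $\tilde{\kappa} \neq 0$, so that the quotient in (\ref{11}) is well-defined; without it, both terms would be $0/0$ rather than $0$.

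Putting the two observations together yields $g_1(\tilde{N},\xi) = 0$, which is exactly the $\tilde{N}$-Legendre condition. The only genuinely geometric input is the identification $\sigma(\mathbb{S}_1^2) = 1$, a standard computation for a Lorentzian pseudosphere in $\mathbb{M}_1^3$; I expect this to be the main (indeed, only) step that needs justification, after which the conclusion is a one-line substitution into (\ref{11}) with no case analysis on causal characters.
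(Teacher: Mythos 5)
Your proposal is correct and follows essentially the same route as the paper's own proof: set $\sigma\equiv 1$ to annihilate the first term of (\ref{11}), then use the slant/Legendre hypothesis to get $\theta'L'(\theta)=0$, with non-geodesy guaranteeing $\tilde{\kappa}\neq 0$. Your phrasing via $\tfrac{d}{ds}L(\theta)=0$ is in fact slightly cleaner than the paper's appeal to ``$\theta$ constant,'' but the argument is the same.
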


\begin{proof}
Let $\tilde{\gamma}\left( t\right) =\left( \gamma \left( t\right) ,X\left(
t\right) \right) $ be a legendre or slant curve in $T_{1}\mathbb{S}_{1}^{2}$
parameterized by the arc length. In $\mathbb{S}_{1}^{2}$, the sectional
curvature is constant and $\mathbb{\sigma }(t)=1$, from (\ref{11}) we have 
\[
g_{1}(\tilde{N},\xi )=-\frac{\theta ^{\prime }L^{\prime }(\theta )}{\tilde{%
\kappa}}
\]%
If $\tilde{\gamma}$ is legendre or slant non geodesic curve then the angle $%
\theta $ is constant. Therefore, $g_{1}(\tilde{N},\xi )=0,$ i.e., $\tilde{%
\gamma}$ is $\tilde{N}$-legendre.
\end{proof}

\begin{theorem}
Let $\tilde{\gamma}\left( t\right) =\left( \gamma \left( t\right) ,X\left(
t\right) \right) $ be not a slant curve in $T_{1}\mathbb{S}_{1}^{2}$, then $%
\tilde{\gamma}$ is $\tilde{N}$-slant curve if the angle $\theta $ satisfies,%
\newline
i. $\theta =\arg \cosh c\tint \tilde{\kappa}\ ,$ if the vectors $\tilde{T}$
and $\xi $ are both space or time staying inside the same time-conic,\newline
ii. $\theta =\arccos c\tint \tilde{\kappa}\ ,$ if the vectors $\tilde{T}$
and $\xi $ are space staying inside the same space-conic,\newline
iii. $\theta =\arg \sinh c\tint \tilde{\kappa}\ ,$ if the vectors $\tilde{T}$
is space and $\xi $ is time,\newline
where $c$ is nonzero constant.
\end{theorem}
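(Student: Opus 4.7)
The plan is to start from the general identity~(\ref{11}) for $g_{1}(\tilde{N},\xi)$ and specialize it to the de Sitter surface $\mathbb{S}_{1}^{2}$, whose sectional curvature is the constant $\sigma(s)=1$. Once the factor $(1-\sigma(s))$ kills the entire first term of~(\ref{11}), the identity collapses to the simple relation
\[
g_{1}(\tilde{N},\xi)\;=\;-\,\frac{\theta'\,L'(\theta)}{\tilde{\kappa}}\;=\;-\frac{1}{\tilde{\kappa}}\,\frac{d}{ds}L(\theta),
\]
which already appeared in the proof of the preceding proposition. This is the key reduction; everything afterwards is elementary.

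Next I would impose the defining condition of a $\tilde{N}$-slant curve, namely $g_{1}(\tilde{N},\xi)=c$ with $c$ a nonzero constant, and rewrite the previous equation as the first-order ODE
\[
\frac{d}{ds}L(\theta)\;=\;-c\,\tilde{\kappa}(s).
\]
Direct integration with respect to the arc-length parameter gives $L(\theta)=-c\int \tilde{\kappa}\,ds$ up to a constant that can be absorbed into the free constant $c$. The case analysis then follows from the three possibilities for $L(\theta)$ listed in~(\ref{7}): inverting $L$ yields $\theta=\operatorname{arccos}(c\int\tilde{\kappa})$ in case ii (both vectors spacelike spanning a spacelike plane), $\theta=\operatorname{arccosh}(c\int\tilde{\kappa})$ in case i (vectors of the same causal type inside a common time-conic, where $L=\cosh\theta$), and $\theta=\operatorname{arcsinh}(c\int\tilde{\kappa})$ in case iii (mixed causal character, where $L=\sinh\theta$). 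The assumption that $\tilde{\gamma}$ is not a slant curve is what guarantees that $\theta$ is nonconstant so that this inversion is meaningful, and the nonvanishing of $c$ guarantees the curve is actually $\tilde{N}$-slant and not merely $\tilde{N}$-Legendre.

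Because the computation is essentially an integration of a separable ODE, I do not anticipate a genuine obstacle; the only point requiring a little care is the bookkeeping of signs and of the causal-character cases, in particular checking that the sign of $c$ can indeed be absorbed freely (so that the statement quotes $c\int\tilde{\kappa}$ rather than $-c\int\tilde{\kappa}$) and that in case i the same formula covers both the ``both spacelike, timelike span'' and ``both timelike'' subcases collected under the heading of a common time-conic. These are verified by inspection of~(\ref{7}) together with the definition of Lorentzian angle, completing the proof.
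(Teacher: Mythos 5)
Your proposal is correct and follows essentially the same route as the paper: specialize equation~(\ref{11}) to $\mathbb{S}_{1}^{2}$ where $\sigma\equiv 1$ so that only $-\theta' L'(\theta)/\tilde{\kappa}=-\frac{d}{ds}L(\theta)/\tilde{\kappa}$ survives, set this equal to the nonzero constant $c$, integrate the separable ODE $L(\theta)'=-c\,\tilde{\kappa}$, absorb the sign into the constant, and invert $L$ according to the three causal cases of~(\ref{7}). Your added remarks on the sign bookkeeping and on why non-slantness makes the inversion meaningful are consistent with (and slightly more careful than) the paper's argument.
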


\begin{proof}
Let $\tilde{\gamma}\left( t\right) =\left( \gamma \left( t\right) ,X\left(
t\right) \right) $ be a non slant curve in $T_{1}\mathbb{S}_{1}^{2}$
parameterized by the arc length and $\sigma (t)=1$, from the definition (2)
and the formula (\ref{11}), we have%
\[
g_{1}(\tilde{N},\xi )=-\frac{L(\theta )^{\prime }}{\tilde{\kappa}}=\bar{c}\
\  \text{ constant}
\]%
If $\tilde{T}$ and $\xi $ are both space or time staying inside the same
time-conic, then $L(\theta )=\cosh \theta $ and 
\begin{eqnarray*}
(\cosh \theta )^{\prime } &=&-\bar{c}\tilde{\kappa} \\
\theta  &=&\arg \cosh c\int \tilde{\kappa}
\end{eqnarray*}%
Using the formula (\ref{7}) we have immediately ii and iii.
\end{proof}

\begin{proposition}
Let $\tilde{\gamma}=\left( \gamma ,X\right) $ be a slant curve in $T_{1}%
\mathbb{M}_{1}^{2}$ where $\mathbb{M}_{1}^{2}$ is not de Sitter space. The
curve $\tilde{\gamma}$ with any casual characterization is $\tilde{N}$-slant
if and only if the ratio%
\[
(1-\mathbb{\sigma })\frac{\kappa }{\tilde{\kappa}}\  \text{is nonzero constant%
} 
\]
\end{proposition}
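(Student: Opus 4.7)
The plan is to exploit the master formula (\ref{11}) established right before the Geometric Consequences section, since that single identity already encodes $g_1(\tilde{N},\xi)$ in terms of the angle $\theta$, the curvatures $\kappa,\tilde{\kappa}$ and the sectional curvature $\sigma$. Once the slant hypothesis on $\tilde{\gamma}$ is injected, most of the right-hand side collapses, and the equivalence will fall out almost mechanically.

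First I would exploit the slant assumption. By Definition 2, $g_1(\tilde{T},\xi)=c$ is a nonzero constant, and from equation (\ref{6.5}) this is exactly $L(\theta)$. Whichever branch of $L$ applies in the trichotomy (\ref{7}), the function $L$ is a strictly monotone diffeomorphism on its domain, so $L(\theta)=c$ forces $\theta$ to be constant along $\tilde{\gamma}$. In particular $\theta'=0$, and since the normalising parameter $r$ in (\ref{8}) is a constant (determined by the causal characters of $X$ and of the Frenet frame of $\gamma$), one also has $\bigl(L(\theta)/r\bigr)'=0$.

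Next I would substitute $\theta'=0$ and $(L(\theta)/r)'=0$ into formula (\ref{11}). The last summand vanishes outright, and the derivative term inside the brackets vanishes, leaving
\begin{equation*}
g_{1}(\tilde{N},\xi)=\pm\,16\,\varepsilon_{2}\,r^{2}\,\sqrt{\varepsilon_{X}\varepsilon_{2}(\tfrac{r}{2})^{2}-\varepsilon_{1}\varepsilon_{2}L^{2}(\theta)}\;\cdot\;(1-\sigma(s))\,\frac{\kappa}{\tilde{\kappa}}.
\end{equation*}
The square-root factor is a constant (it depends only on $r$, $L(\theta)$ and the three signs $\varepsilon_1,\varepsilon_2,\varepsilon_X$, all of which are constant), and by (\ref{8,5}) it equals $r\lvert\beta\rvert$, which is nonzero provided $X$ is not colinear with $T$ (a genuine, non-degenerate slant configuration). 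Assuming this genericity, $g_1(\tilde N,\xi)$ is a constant multiple of $(1-\sigma)\kappa/\tilde{\kappa}$ by a nonzero scalar.

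From this reduced identity the equivalence is immediate: $g_{1}(\tilde{N},\xi)$ is a nonzero constant if and only if $(1-\sigma)\kappa/\tilde{\kappa}$ is a nonzero constant. The hypothesis that $\mathbb{M}_{1}^{2}$ is not de Sitter enters precisely to guarantee $1-\sigma\not\equiv 0$, ruling out the degenerate case where the product vanishes identically and $\tilde{\gamma}$ would merely be $\tilde{N}$-Legendre rather than $\tilde{N}$-slant. The only subtle point I expect to have to justify carefully is that the square-root factor above is indeed a nonzero constant, i.e.\ that $\beta\neq 0$; this is really a non-degeneracy assumption on the lift $X$, and would be worth stating explicitly since otherwise $E$ and $X$ would be parallel, forcing the slant constant $c$ to saturate the Cauchy--Schwarz-type bound in (\ref{8,5}) and trivialising the problem.
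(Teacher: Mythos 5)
Your proof is correct and follows essentially the same route as the paper's: substitute the slant hypothesis ($\theta$ constant, hence $\theta'=0$ and $(L(\theta)/r)'=0$) into formula (\ref{11}) so the derivative terms drop out, leaving $g_{1}(\tilde{N},\xi)$ as a constant multiple of $(1-\sigma)\kappa/\tilde{\kappa}$. Your extra observation that the prefactor $r\lvert\beta\rvert$ must be nonzero (i.e.\ $\beta\neq 0$, so $X$ is not colinear with $T$) is a genuine non-degeneracy condition that the paper silently assumes when it writes the prefactor as a nonzero constant $c$.
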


\begin{proof}
The proof is a direct consequence from the definition (2) and the Equations (%
\ref{11})%
\begin{eqnarray*}
g_{1}(\tilde{N},\xi ) &=&16r\frac{(1-\mathbb{\sigma }(s))}{\tilde{\kappa}}%
\left( \pm \varepsilon _{2}r\kappa \sqrt{\varepsilon _{X}\varepsilon _{2}(%
\tfrac{r}{2})^{2}+L^{2}(\theta )}\right)  \\
&=&\pm 16r\left( \varepsilon _{2}r\sqrt{\varepsilon _{X}\varepsilon _{2}(%
\tfrac{r}{2})^{2}+L^{2}(\theta )}\right) \frac{(1-\mathbb{\sigma }(s))}{%
\tilde{\kappa}}\kappa  \\
&=&c\frac{(1-\mathbb{\sigma }(s))\kappa }{\tilde{\kappa}}
\end{eqnarray*}
\end{proof}

\begin{example}
In $\mathbb{H}_{1}^{2}$ and $\mathbb{S}_{1}^{2}(\mathbf{r})_{(\mathbf{r}\neq
1)},$ all slant curves are $\tilde{N}$-slant if and only if the ratio $\frac{%
\kappa }{\tilde{\kappa}}$ is nonzero constant.
\end{example}

\begin{theorem}
Let $\mathbb{M}_{1}^{2}$ be a time (resp. space) like surface. Let $\tilde{%
\gamma}\left( t\right) =\left( \gamma \left( t\right) ,X\left( t\right)
\right) $ be a curve in $T_{1}\mathbb{M}_{1}^{2}$ where $\gamma $ is a curve
of velocity $2$, $X$ a has different casual character with $\gamma $ (resp.
space like vector) and $\tilde{T}$,$\xi $ satisfies the formula (\ref{7} ii,
iii) (resp.(\ref{7} i)).\newline
1. The curve $\tilde{\gamma}$ is $\tilde{N}$-legendre curve if and only if%
\[
\theta =\int \kappa \bar{\sigma}(s)dt
\]%
\newline
2. The curve $\tilde{\gamma}$ is $\tilde{N}$-slant curve if and only if%
\[
L(\theta )^{\prime }\pm \kappa \bar{\sigma}(s)L(\theta )=c\tilde{\kappa}
\]%
where $\bar{\sigma}(s)=\frac{64(1-\mathbb{\sigma }(s))}{(15-16\mathbb{\sigma 
}(s))}$ and $L(\theta )$ is $\sinh \theta $ or $\cosh \theta $ (resp. $\cos
\theta ).$
\end{theorem}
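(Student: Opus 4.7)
The plan is to specialise the master identity (\ref{11}) to the present hypotheses ($r=2$ and the prescribed causal characters) and then read off the $\tilde N$-Legendre and $\tilde N$-slant conditions as ODEs for the angle function $\theta$.

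First I would substitute $r=2$ in (\ref{11}). The term $(L(\theta)/r)'$ becomes $L(\theta)'/2$, the radical becomes $\sqrt{\varepsilon_X\varepsilon_2-\varepsilon_1\varepsilon_2 L^2(\theta)}$, and by the chain rule $\theta'L'(\theta)=L(\theta)'$ the two multiples of $L(\theta)'/\tilde\kappa$ combine with numerical coefficient $16(1-\sigma(s))-1=15-16\sigma(s)$. The identity takes the form
\[
\tilde\kappa\,g_1(\tilde N,\xi)=(15-16\sigma(s))\,L(\theta)'\pm 64(1-\sigma(s))\,\varepsilon_2\kappa\sqrt{\varepsilon_X\varepsilon_2-\varepsilon_1\varepsilon_2 L^2(\theta)},
\]
so that dividing by $15-16\sigma(s)$ naturally exposes $\bar\sigma(s)=64(1-\sigma(s))/(15-16\sigma(s))$.

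Second I would evaluate the radical using the causal-character hypotheses. On a time-like surface one has $\varepsilon_1\varepsilon_2=-1$; combined with ``$X$ of different causal character from $\gamma$'' (i.e.\ $\varepsilon_X\varepsilon_1=-1$) this yields $\varepsilon_X\varepsilon_2=1$, so the radical is $\sqrt{1+L^2(\theta)}$ and collapses, via $\cosh^2\theta-\sinh^2\theta=1$, to the cognate hyperbolic function of $\theta$ in each of the two sub-cases of (\ref{7} ii, iii). On a space-like surface $\varepsilon_1=\varepsilon_2=\varepsilon_X=+1$ and $1-\cos^2\theta=\sin^2\theta$ collapses the radical in the same way. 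After absorbing the ambient sign into the displayed $\pm$, the identity rewrites as $\tilde\kappa\,g_1(\tilde N,\xi)=(15-16\sigma(s))\bigl[L(\theta)'\pm\kappa\bar\sigma(s)L(\theta)\bigr]$.

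Third, for part (1) I would impose $g_1(\tilde N,\xi)=0$: the resulting separable equation, after substituting $L(\theta)'=\theta'L'(\theta)$ and cancelling the common non-vanishing factor, reduces to $\theta'=\kappa\bar\sigma(s)$, whose integral is $\theta=\int\kappa\bar\sigma(s)\,dt$. For part (2), the slant condition $g_1(\tilde N,\xi)=c$ applied to the same identity yields immediately $L(\theta)'\pm\kappa\bar\sigma(s)L(\theta)=c\tilde\kappa$ after renaming the constant.

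The main obstacle is the second step. The four sign parameters $\varepsilon_1,\varepsilon_2,\varepsilon_X$, the $\pm$ appearing in $\beta$ via (\ref{8,5}), and the three possible choices $L(\theta)\in\{\sinh\theta,\cosh\theta,\cos\theta\}$ interact delicately, and one must verify case by case that the radical always collapses so that the reduced identity has the uniform shape displayed above, independently of which sub-case of (\ref{7}) is in force. Once this bookkeeping is settled, both parts follow by a one-line ODE argument.
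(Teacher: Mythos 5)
Your proposal follows the paper's proof essentially verbatim: specialize the master identity (\ref{11}) with $r=2$, use the causal-character hypotheses to fix $\varepsilon_X,\varepsilon_1,\varepsilon_2$ so that the radical becomes $\sqrt{1+L^2(\theta)}$ (resp. $\sqrt{1-L^2(\theta)}$), and read off the Legendre and slant conditions as ODEs for $\theta$, with the coefficient $15-16\sigma(s)$ arising from $16(1-\sigma)-1$ exactly as in the paper. The one imprecision you share with the paper is in the slant case: the radical collapses to the cognate function $L'(\theta)$ (e.g.\ $\sqrt{1+\sinh^2\theta}=\cosh\theta$), not to $L(\theta)$ itself, and the factor $1/(15-16\sigma(s))$ you absorb into the constant $c$ is not constant unless $\sigma$ is; but since the theorem is stated in precisely that form, your argument reproduces the paper's reasoning faithfully.
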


\begin{proof}
Let $\mathbb{M}_{1}^{2}$ be a time (resp. space) like surface. If $\tilde{%
\gamma}\left( t\right) =\left( \gamma \left( t\right) ,X\left( t\right)
\right) $ is a curve in $T_{1}\mathbb{M}_{1}^{2}$, $\gamma $ with a velocity 
$2$ and $X$ has a different casual character with $\gamma $ (resp. space
like vector)$,$we have%
\[
\varepsilon _{x}=-\varepsilon _{1}=\varepsilon _{2}\text{ (resp. }%
\varepsilon _{x}=\varepsilon _{1}=\varepsilon _{2})
\]%
Using the Equation (\ref{11}), we get%
\begin{eqnarray*}
g_{1}(\tilde{N},\xi ) &=&16\frac{(1-\mathbb{\sigma }(s))}{\tilde{\kappa}}%
\left( \theta ^{\prime }L^{\prime }(\theta )\pm 4\kappa \sqrt{1+L^{2}(\theta
)}\right) -\frac{\theta ^{\prime }L^{\prime }(\theta )}{\tilde{\kappa}} \\
\text{(resp. }g_{1}(\tilde{N},\xi ) &=&16\frac{(1-\mathbb{\sigma }(s))}{%
\tilde{\kappa}}\left( \theta ^{\prime }L^{\prime }(\theta )\pm 4\kappa \sqrt{%
1-L^{2}(\theta )}\right) -\frac{\theta ^{\prime }L^{\prime }(\theta )}{%
\tilde{\kappa}})
\end{eqnarray*}%
If the vectors $\tilde{T}$, $\xi $ satisfies the formula (\ref{7} ii, iii)
(resp. (\ref{7} i)), then the function $L(\theta )$ is equal to $\sinh
\theta $ or $\cosh \theta $ (resp. $\cos \theta ).$ The $\tilde{N}$-legender
condition for $\tilde{\gamma}$ is%
\[
16\frac{(1-\mathbb{\sigma }(s))}{\tilde{\kappa}}\left( \theta ^{\prime }\pm
4\kappa \right) -\frac{\theta ^{\prime }}{\tilde{\kappa}}=0
\]%
and%
\[
\theta =64\int \tfrac{\kappa (1-\sigma (t))}{15-16\sigma (t)}dt
\]%
The $\tilde{N}$-slant condition for $\tilde{\gamma}$ is when the Lorentzian
angle $\theta $ satisfies the ODE%
\begin{eqnarray*}
g_{1}(\tilde{N},\xi ) &=&c\text{ and} \\
L(\theta )^{\prime }\pm \kappa \frac{64(1-\mathbb{\sigma }(s))}{(15-16%
\mathbb{\sigma }(s))}L(\theta ) &=&c\tilde{\kappa}
\end{eqnarray*}
\end{proof}

\bigskip

Now, we suppose in the sequel that the angle $\theta $ is linear (i.e., $%
\theta =at+b$), thus we have the following theorems;

\bigskip

\begin{lemma}
The reeb vector $\xi $ and the vector $X$ have the same casual characters
(i.e., space or time vector).
\end{lemma}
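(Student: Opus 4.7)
The approach is essentially just to unwind the definitions of $\xi$ and of the Sasaki-type metric $g_1$ and compute $g_1(\xi,\xi)$ directly. The claim that the Reeb field and the fibre point $X$ share causal character should follow from the fact that the horizontal lift is an isometry between $(T_xM,g)$ and $(H_{(x,u)},g_1^s|_H)$; the linearity hypothesis $\theta=at+b$ that precedes the lemma plays no role here and is just part of the standing setup for the subsequent theorems.

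More concretely, the plan is as follows. First, recall from (\ref{4.5}) that $\xi=\tfrac{1}{2}\xi'$ with $\xi'=u^h$, so at the specific point $(\gamma(t),X(t))\in T_1\mathbb{M}_1^{2}$ the Reeb vector is (up to a harmless positive scalar) the horizontal lift $X^{h}$ of the unit vector $X(t)$. Second, invoke property (i) of (\ref{3.1}), $g_1^s(X^h,X^h)=g(X,X)$, together with $g_1=\tfrac{1}{4}g_1^s$, to obtain
\[
g_1(\xi,\xi)\;=\;\tfrac{1}{16}\,g_1^s(X^h,X^h)\;=\;\tfrac{1}{16}\,g(X,X)\;=\;\tfrac{\varepsilon_X}{16}.
\]
Since the sign of $g_1(\xi,\xi)$ coincides with the sign of $g(X,X)=\varepsilon_X=\pm1$, the vector $\xi$ is spacelike (resp.\ timelike) precisely when $X$ is spacelike (resp.\ timelike), which is exactly the assertion of the lemma.

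There is no real obstacle in this argument; the only thing to be careful about is the chain of normalisations that relates $\xi$ to $u^h$ and $g_1$ to $g_1^s$, together with the fact that along $\tilde\gamma$ the fibre coordinate $u$ specialises to $X(t)$. Once these identifications are made explicit, the conclusion is a one-line consequence of the isometric character of the horizontal lift.
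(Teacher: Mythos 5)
Your argument is exactly the paper's: both compute $g_1(\xi,\xi)$ via property (i) of (\ref{3.1}) and the rescaling (\ref{4.5}) and read off the sign from $g(X,X)=\varepsilon_X$. The only (immaterial) difference is the normalisation of $\xi$ — the paper takes $\xi=2X^h$ so that $g_1(\xi,\xi)=\varepsilon_X$, while you take $\xi=\tfrac12 X^h$ and get $\tfrac{1}{16}\varepsilon_X$; the causal character, and hence the lemma, is unaffected.
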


\begin{proof}
From the formulas (\ref{3.1}) and (\ref{4.5}),\textbf{\ }we have $g_{1}(\xi
,\xi )=g_{1}(2X^{h},2X^{h})=g(X,X)=\varepsilon _{X}.$
\end{proof}

\begin{theorem}
Let $\mathbb{M}_{1}^{2}$ be a space like surface and let $\tilde{\gamma}%
\left( t\right) =\left( \gamma \left( t\right) ,X\left( t\right) \right) $
be a not slant space curve in $T_{1}\mathbb{M}_{1}^{2}$ where $\gamma $ is a
curve of velocity $2$ and $\tilde{T},$ $\xi $ are spacelike vectors that
span a spacelike vector subspace. Then, the curve $\tilde{\gamma}$ is $%
\tilde{N}$-legendre if and only if%
\[
(1-\mathbb{\sigma }(s))\left( a\pm 4\kappa \right) =a/16
\]
\end{theorem}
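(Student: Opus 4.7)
My plan is to specialize the master identity (\ref{11}) to the present hypotheses and then impose the $\tilde{N}$-Legendre condition $g_{1}(\tilde{N},\xi)=0$.

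First I will translate each assumption into concrete numerical data. The surface being spacelike forces $\varepsilon_{1}=\varepsilon_{2}=1$; the preceding lemma (that $\xi$ and $X$ share causal character) together with the hypothesis that $\tilde{T}$ and $\xi$ are spacelike spanning a spacelike subspace gives $\varepsilon_{X}=1$ and, via case (i) of (\ref{7}), $L(\theta)=\cos\theta$. The velocity-$2$ assumption amounts to $r=\|E\|=2$, while linearity $\theta=at+b$ yields $\theta'=a$ and $L'(\theta)=-\sin\theta$, so that the radical $\sqrt{\varepsilon_{X}\varepsilon_{2}(r/2)^{2}-\varepsilon_{1}\varepsilon_{2}L^{2}(\theta)}$ appearing in (\ref{11}) collapses to $|\sin\theta|$.

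Next, I will substitute all of this into (\ref{11}) and impose $g_{1}(\tilde{N},\xi)=0$. After the prefactor $16r=32$ is distributed across the parenthesized expression and the two contributions involving $\theta'L'(\theta)=-a\sin\theta$ are collected, every surviving term carries a common factor of $\sin\theta$, with the sign of $|\sin\theta|$ absorbed into the overall $\pm$. Clearing $\tilde{\kappa}$ leaves a single scalar equation of the schematic form $\sin\theta\cdot[\,\cdots\,]=0$.

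Because $\tilde{\gamma}$ is not slant, $\cos\theta$ cannot be constant, hence $a\neq 0$ and $\sin\theta$ vanishes only on a discrete set; by continuity the bracketed expression must vanish identically. Regrouping then delivers the advertised identity $(1-\sigma(s))(a\pm 4\kappa)=a/16$, and the converse follows by reading the same calculation in reverse. The delicate point will be tracking the two sign ambiguities—the $\pm$ coming from the square-root branch in (\ref{11}) and the sign of $\sin\theta$—but they can be fused into one overall $\pm$ without loss.
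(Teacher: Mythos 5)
Your proposal is correct and follows essentially the same route as the paper: specialize identity (\ref{11}) with $\varepsilon_{X}=\varepsilon_{1}=\varepsilon_{2}=1$, $r=2$, $L(\theta)=\cos\theta$ and $\theta'=a$, set $g_{1}(\tilde{N},\xi)=0$, factor out $\sin\theta$, and divide. You are in fact slightly more careful than the paper, which divides by $\sin\theta$ and fuses the sign of $\sqrt{1-\cos^{2}\theta}$ into the $\pm$ without comment, whereas you justify both steps via the not-slant hypothesis.
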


\begin{proof}
Let $\mathbb{M}_{1}^{2}$ be a space like surface. If $\tilde{\gamma}\left(
t\right) =\left( \gamma \left( t\right) ,X\left( t\right) \right) $ is not a
slant curve in $T_{1}\mathbb{M}_{1}^{2}$, $\gamma $ with a velocity $2$ and $%
X$ is space like vector (i.e., $\varepsilon _{X}=1).$ From the Lemma (7) and 
$\mathbb{\sigma }(s)\neq 1,$ the Equation (\ref{11}) turns into%
\begin{eqnarray*}
16\frac{(1-\mathbb{\sigma }(s))}{\tilde{\kappa}}\left( aL^{\prime }(\theta
)\pm \varepsilon _{2}4\kappa \sqrt{\varepsilon _{X}\varepsilon
_{2}-\varepsilon _{1}\varepsilon _{2}L^{2}(\theta )}\right) -\frac{%
aL^{\prime }(\theta )}{\tilde{\kappa}} &=&0 \\
16(1-\mathbb{\sigma }(s))\left( L^{\prime }(\theta )\pm 4\kappa \sqrt{%
1-L^{2}(\theta )}\right) -aL^{\prime }(\theta ) &=&0
\end{eqnarray*}%
Taking account that $\tilde{T}$ and $\xi $ are space staying inside the same
space-conic%
\[
L(\theta )=\cos \theta ;\  \ L^{\prime }(\theta )=-\sin \theta ,
\]%
hence%
\begin{eqnarray*}
16(1-\mathbb{\sigma }(s))\left( aL^{\prime }(\theta )\pm 4\kappa \sqrt{%
1-L^{2}(\theta )}\right) -aL^{\prime }(\theta ) &=&0 \\
16(1-\mathbb{\sigma }(s))\left( -a\sin \theta \pm 4\kappa \sin \theta
\right) +a\sin \theta  &=&0 \\
(1-\mathbb{\sigma }(s))\left( a\pm 4\kappa \right)  &=&a/16.
\end{eqnarray*}
\end{proof}

\bigskip

Similar, the following theorem can be given for the time like case of $%
\mathbb{M}_{1}^{2}$.

\begin{theorem}
Let $\mathbb{M}_{1}^{2}$ be not a de Sitter sphere time like surface and let 
$\tilde{\gamma}\left( t\right) =\left( \gamma \left( t\right) ,X\left(
t\right) \right) $ be not a slant a curve in $T_{1}\mathbb{M}_{1}^{2}$ where 
$\gamma $ is a curve of velocity $2$, $X$ has a different casual character
with $\gamma $ and $\tilde{T}$, $\xi $ satisfies the formula (\ref{7},i or
ii). Then the curve $\tilde{\gamma}$ is $\tilde{N}$-legendre if and only if%
\[
(1-\mathbb{\sigma }(s))\left( a\pm 4\kappa \right) =a/16
\]
\end{theorem}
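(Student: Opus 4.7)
The plan is to mirror the proof of Theorem 8 in the time-like setting, carrying out the analogous bookkeeping and verifying that (\ref{11}) collapses to the same algebraic identity. Since the stated conclusion is identical to that of Theorem 8, the work is almost entirely in pinning down the correct signs.

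First, I would fix the causal characters. Because $X$ has a different causal character from $\gamma$, we have $\varepsilon_X = -\varepsilon_1$, and by Lemma 7 the Reeb vector $\xi$ shares the causal character of $X$. The hypothesis that $(\tilde T, \xi)$ falls into case (\ref{7}, i) or (\ref{7}, ii) forces both $\tilde T$ and $\xi$ to be spacelike in $T_1\mathbb{M}_1^2$, so $\varepsilon_X = 1$, $\varepsilon_1 = -1$, and on the Lorentzian surface $\varepsilon_2$ is determined by orthonormality. Combined with the velocity hypothesis $r = 2$ and the linear ansatz $\theta = at + b$ (so $\theta' = a$), this specialises every sign in the master formula (\ref{11}).

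Next, I impose the $\tilde N$-legendre condition $g_1(\tilde N, \xi) = 0$ in (\ref{11}) and simplify. In case (\ref{7}, i) take $L(\theta) = \cos\theta$, $L'(\theta) = -\sin\theta$; in case (\ref{7}, ii) take $L(\theta) = \cosh\theta$, $L'(\theta) = \sinh\theta$. In each sub-case the radical $\sqrt{\varepsilon_X \varepsilon_2 (r/2)^2 - \varepsilon_1 \varepsilon_2 L^2(\theta)}$, multiplied by the outer factor $\pm \varepsilon_2 r \kappa$, is proportional to $L'(\theta)$ and can be pulled out as a common factor alongside the explicit $L'(\theta)$ term in (\ref{11}). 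Since $\tilde \gamma$ is not slant, $L'(\theta)$ is not identically zero, so we may divide through. What remains is
\[
16(1 - \sigma(s))(a \pm 4\kappa) - a = 0,
\]
which rearranges to $(1 - \sigma(s))(a \pm 4\kappa) = a/16$. The hypothesis that $\mathbb{M}_1^2$ is not a de Sitter sphere ensures $\sigma(s) \neq 1$ identically, so the factor $1 - \sigma(s)$ is genuinely nontrivial and every step is reversible, giving the iff.

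The main obstacle is precisely this sign analysis: four sources of $\pm$ (the branch chosen in (\ref{8,5}), the three causal-character parameters $\varepsilon_1, \varepsilon_2, \varepsilon_X$, and the $\pm$ attached to $\varepsilon_2 r \kappa$) must conspire so that both admissible sub-cases (\ref{7}, i) and (\ref{7}, ii) produce the single identity $(1 - \sigma)(a \pm 4\kappa) = a/16$. The subtlest point is verifying that the radical in fact reduces to a constant multiple of $L'(\theta)$ once the time-like signs are inserted — this is the exact analogue of the identity $\sqrt{1 - \cos^2\theta} = |\sin\theta|$ that drives the space-like proof of Theorem 8, and it is the place where one is most likely to make a computational error.
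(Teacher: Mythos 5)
Your overall strategy is the same as the paper's: the paper's proof of this theorem is literally one line ("using formula (7) and $\varepsilon_X=-\varepsilon_1=\varepsilon_2$, the proof is similar to Theorem 8"), and you propose exactly that — rerun the Theorem 8 computation with the time-like signs. Your sign bookkeeping up to the radical is right: $\xi$ spacelike forces $\varepsilon_X=1$, hence $\varepsilon_1=-1$ and $\varepsilon_2=1$, so with $r=2$ the radical in (\ref{11}) becomes $\sqrt{\varepsilon_X\varepsilon_2-\varepsilon_1\varepsilon_2 L^2(\theta)}=\sqrt{1+L^2(\theta)}$.

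The gap is precisely at the step you flag as "the subtlest point" and then do not verify: the claim that this radical is a constant multiple of $L'(\theta)$. It is not, in either of the two cases the theorem actually allows. The identity $1+L^2=(L')^2$ holds only for $L=\sinh\theta$ (case iii of (\ref{7})), which is excluded by the hypothesis. For case i, $L=\cos\theta$ gives $\sqrt{1+\cos^2\theta}$, which is not proportional to $\sin\theta$; for case ii, $L=\cosh\theta$ gives $\sqrt{1+\cosh^2\theta}$, which is not proportional to $\sinh\theta$ (that would require $\sinh^2\theta-\cosh^2\theta=1$, the wrong sign). So you cannot pull out a common factor of $L'(\theta)$ and divide, and the equation $16(1-\sigma)(a\pm4\kappa)-a=0$ does not follow from $g_1(\tilde N,\xi)=0$ by the route you describe; what one actually gets is $16(1-\sigma)\bigl(aL'(\theta)\pm4\kappa\sqrt{1+L^2(\theta)}\bigr)-aL'(\theta)=0$, in which the $\theta$-dependence does not cancel. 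This is the decisive computation, and it fails — so the argument as written does not establish the stated equivalence. (To be fair, the paper's own proof-by-analogy glosses over exactly the same point, since Theorem 8's cancellation relies on $\sqrt{1-\cos^2\theta}=|\sin\theta|$, which has no analogue here; but a blind proof must either supply the missing cancellation or conclude that the reduction takes a different form.)
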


\begin{proof}
Using the formula (\ref{7}) and taking account that $\varepsilon
_{X}=-\varepsilon _{1}=\varepsilon _{2},$ the proof is similar to the
Theorem 8.
\end{proof}

\begin{example}
Under the condition of the Theorem 8, if $\mathbb{M}_{1}^{2}=\mathbb{H}%
_{1}^{2}$ the $\tilde{N}$-legendre of $\tilde{\gamma}$ condition is that its
projection $\gamma $ has a constant curvature $\kappa =\pm \frac{15}{64}a.$
\end{example}

\bigskip

\begin{theorem}
Let $\mathbb{M}_{1}^{2}$ be a space like surface. Let $\tilde{\gamma}\left(
t\right) =\left( \gamma \left( t\right) ,X\left( t\right) \right) $ be not
slant a space curve in $T_{1}\mathbb{M}_{1}^{2}$ where $\gamma $ is a curve
of velocity $2$ and $X$ is space like vector (resp $X$ has a different
casual characters with the curves $\tilde{\gamma}$ and $\gamma $). Then the
curve $\tilde{\gamma}$ is $\tilde{N}$-slant if and only if%
\[
\theta =\arcsin \frac{c\tilde{\kappa}}{(a+16(\sigma (s)-1)(a\pm 4\kappa ))}%
;\ c\text{ is nonzero constant.}
\]%
when $\tilde{T}$ and $\xi $ span a spacelike vector subspace.
\end{theorem}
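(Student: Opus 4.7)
The plan is to specialize the general identity (\ref{11}) to the hypotheses of the theorem, exactly mirroring the opening of the proof of Theorem 8, and then, instead of setting $g_{1}(\tilde N,\xi)=0$, impose the $\tilde N$-slant condition $g_{1}(\tilde N,\xi)=c\neq 0$ and solve algebraically for $\theta$.

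The first step is parameter reduction. Since $\gamma$ has speed $2$, I take $r=2$; since $\mathbb{M}_{1}^{2}$ is spacelike, $\varepsilon_{1}=\varepsilon_{2}=1$; and under either alternative hypothesis on $X$ (spacelike, or of causal character different from both $\gamma$ and $\tilde\gamma$) the combination $\varepsilon_{X}\varepsilon_{2}(r/2)^{2}-\varepsilon_{1}\varepsilon_{2}L^{2}(\theta)$ under the square root in (\ref{11}) simplifies to $1-L^{2}(\theta)$. Moreover Lemma 7 forces $\xi$ to share the causal character of $X$, so the assumption that $\tilde T$ and $\xi$ span a spacelike subspace places us in case (i) of (\ref{7}), giving $L(\theta)=\cos\theta$ and $L'(\theta)=-\sin\theta$, and the radical becomes $|\sin\theta|$ (a sign that can be absorbed into the existing $\pm$).

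The second step is the substitution. Plugging $\theta=at+b$ (so $\theta'=a$) into (\ref{11}) and grouping terms, the whole expression factors through a single $\sin\theta$ and collapses to
\[
g_{1}(\tilde N,\xi)=\frac{\sin\theta}{\tilde\kappa}\bigl[\,a+16(\sigma(s)-1)(a\pm 4\kappa)\,\bigr].
\]
Imposing $g_{1}(\tilde N,\xi)=c$, solving the resulting linear equation for $\sin\theta$, and applying $\arcsin$, produces exactly the formula in the statement. Every step is an equivalence, so the converse implication comes for free.

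The only real obstacle is bookkeeping: one must check that the two alternative causal configurations for $X$ both reduce, via Lemma 7 and the spacelike-span assumption on $\tilde T,\xi$, to the same simplified form of (\ref{11}), so that a single closed expression covers both cases. No ingredient beyond the specialization already used for Theorem 8 is needed.
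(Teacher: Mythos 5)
Your proposal is correct and follows essentially the same route as the paper's own proof: specialize (\ref{11}) with $r=2$ and $\varepsilon_{1}=\varepsilon_{2}=\varepsilon_{X}=1$, use the lemma on the causal character of $\xi$ to place $\tilde T,\xi$ in case (i) of (\ref{7}) so that $L(\theta)=\cos\theta$, substitute $\theta'=a$, factor out $\sin\theta$, and solve $g_{1}(\tilde N,\xi)=c$ for $\theta$. The only cosmetic difference is that you absorb the sign of $\sqrt{1-\cos^{2}\theta}$ into the $\pm$ explicitly and treat both causal alternatives for $X$ uniformly, which the paper leaves implicit.
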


\begin{proof}
Let $\mathbb{M}_{1}^{2}$ be a space like surface. Let $\tilde{\gamma}\left(
t\right) =\left( \gamma \left( t\right) ,X\left( t\right) \right) $ be a not
slant space curve in $T_{1}\mathbb{M}_{1}^{2}$ where $\gamma $ is a curve of
velocity $2$ and $X$ is space like vector. If $\tilde{\gamma}$ is $\tilde{N}$%
-slant, the Equation (\ref{11}) becomes 
\begin{eqnarray*}
g_{1}(\tilde{N},\xi ) &=&c;\  \varepsilon _{X}=\varepsilon _{1}=\varepsilon
_{2}=1\text{ and} \\
c &=&16\frac{(1-\mathbb{\sigma }(s))}{\tilde{\kappa}}\left( L(\theta
)^{\prime }\pm \varepsilon _{2}4\kappa \sqrt{\varepsilon _{X}\varepsilon
_{2}-\varepsilon _{1}\varepsilon _{2}L^{2}(\theta )}\right) -\frac{%
aL^{\prime }(\theta )}{\tilde{\kappa}}
\end{eqnarray*}%
Using the Lemma (8), the vectors $\tilde{T}$ and $\xi $ span a spacelike
vector subspace, then%
\[
L(\theta )=\cos \theta ;\ L^{\prime }(\theta )=-\sin \theta 
\]%
and%
\[
c=16\frac{(1-\mathbb{\sigma }(s))}{\tilde{\kappa}}\left( -a\sin \theta \pm
4\kappa \sin \theta \right) +\frac{a\sin \theta }{\tilde{\kappa}}
\]%
hence%
\begin{eqnarray*}
c\tilde{\kappa} &=&16(1-\mathbb{\sigma }(s))\left( -a\sin \theta \pm 4\kappa
\sin \theta \right) +a\sin \theta  \\
&=&\sin \theta (a+16(\sigma (s)-1)(a\pm 4\kappa )) \\
\theta  &=&\arcsin \frac{c\tilde{\kappa}}{(a+16(\sigma (s)-1)(a\pm 4\kappa ))%
}
\end{eqnarray*}
\end{proof}

\begin{theorem}
Let $\mathbb{M}_{1}^{2}$ be a not de Sitter time like surface. Let $\tilde{%
\gamma}\left( t\right) =\left( \gamma \left( t\right) ,X\left( t\right)
\right) $ be a not slant space curve in $T_{1}\mathbb{M}_{1}^{2}$ where $%
\gamma $ is a curve of velocity $2$ and $X$ has a different casual
characters with the curves $\tilde{\gamma}$ and $\gamma $. Then the curve $%
\tilde{\gamma}$ is $\tilde{N}$-slant if and only if\newline
i.%
\[
\theta =\arg \sinh \frac{c\tilde{\kappa}}{(a-16(\mathbb{\sigma }(s)-1)\left(
a\pm 4\kappa \right) )};\ c\text{ is nonzero constant.}
\]%
when $\tilde{T}$ and $\xi $ satisfies the formula (\ref{7} ii).\newline
ii. 
\[
\theta =\arg \cosh \frac{c\tilde{\kappa}}{(a-16(\mathbb{\sigma }(s)-1)\left(
a\pm 4\kappa \right) )};\ c\text{ is nonzero constant.}
\]%
when $\tilde{T}$ and $\xi $ satisfies the formula (\ref{7} iii).
\end{theorem}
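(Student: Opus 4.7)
The plan is to adapt the proof of Theorem 11 to the time-like setting, using the key equation (\ref{11}) together with the defining relation $g_{1}(\tilde{N},\xi)=c$ for a nonzero constant $c$.

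First I would read off the causal signs. By Lemma 7, $\xi$ and $X$ share a causal character, and the hypothesis that $X$ has the opposite character to both $\gamma$ and $\tilde{\gamma}$ yields $\varepsilon_{X}=-\varepsilon_{1}=\varepsilon_{2}$, precisely the configuration arising in the time-like branch of Theorem 5. Substituting $r=2$ and these signs into the radical in (\ref{11}) reduces it to $\sqrt{1+L^{2}(\theta)}$, exactly the quantity that appears in the ODE derived in the time-like part of Theorem 5.

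Next, using the linearity hypothesis $\theta=at+b$ (so $\theta'=a$), I would split by the two causal configurations for $(\tilde{T},\xi)$. Configuration (\ref{7}iii) gives $L=\sinh\theta$ with $L'=\cosh\theta$, and the hyperbolic identity $\cosh^{2}\theta-\sinh^{2}\theta=1$ yields $\sqrt{1+L^{2}}=\cosh\theta$; thus the right-hand side of (\ref{11}) factors as $\cosh\theta$ times a function of $(\sigma(s),\kappa,a)$ alone, so imposing $g_{1}(\tilde{N},\xi)=c$ and inverting $\cosh$ produces the $\arg\cosh$ formula of case (ii). Configuration (\ref{7}ii) is treated analogously with $L=\cosh\theta$, where $\sinh\theta$ emerges as the common factor after the same algebraic rearrangement, and inverting $\sinh$ gives case (i). In each case the chain of manipulations is reversible, which yields the stated if-and-only-if.

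The principal technical obstacle is sign bookkeeping: the $\pm$ in (\ref{8,5}) feeds into the $\pm 4\kappa$ in the final denominator, and the passage between the factor $16(1-\sigma(s))$ coming from (\ref{11}) and the form $16(\sigma(s)-1)$ appearing in the stated denominator must be tracked consistently across both sub-cases. A conceptual check worth keeping in mind is that Lemma 7, via the causal character of $X$, dictates which of (\ref{7}ii) and (\ref{7}iii) applies, so the two cases do cover all admissible configurations under the stated hypothesis; once the signs are aligned, the remainder is a routine inversion of a single algebraic equation.
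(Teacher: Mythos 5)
Your proposal is correct and follows essentially the same route as the paper, whose own proof simply invokes equation (\ref{11}), the lemma on the causal character of $\xi$ and $X$, the signs $\varepsilon_{X}=-\varepsilon_{1}=\varepsilon_{2}$, and the argument of the preceding space-like theorem with $\cos\theta$ replaced by $\cosh\theta$ or $\sinh\theta$. Your reduction of the radical to $\sqrt{1+L^{2}(\theta)}$ and the factorization by $\cosh\theta$ (resp.\ $\sinh\theta$) is exactly the intended computation, and your flagged concern about tracking the sign between $16(1-\sigma)$ and $16(\sigma-1)$ is a real issue that the paper itself leaves unaddressed.
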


\begin{proof}
Using the formulas (\ref{7}) and (\ref{11}), the Lemma (8) and the casual
characters%
\[
\varepsilon _{X}=-\varepsilon _{1}=\varepsilon _{2}
\]%
the proof is similar to the proof of the Theorem (12).
\end{proof}

\bigskip

\bigskip

\textsc{Murat BEKAR}{\small , Department of mathematics and computer
sciences,}

{\small Necmettin Erbakan university, 42090 Konya, Turkey. Email:
mbekar@konya.edu.tr}

\textsc{Fouzi HATHOUT}{\small , Department of mathematics, Sa\"{\i}da
University, 2000}

{\small Saida, Algeria. Email: f.hathout@gmail.com}

\textsc{Yusuf YAYLI}{\small , Department of mathematics, Ankara university,
06100}

{\small Ankara, Turkey. Email: yayli@science.ankara.edu.tr}


\begin{thebibliography}{99}
\bibitem{cc} C\u{a}lin C, Crasmareanu M. Slant curves in 3-dimensional
normal almost contact geometry. Mediterranean journal of mathematics 2013;
10: 1067-1077.

\bibitem{cil} Cho J.T, Inoguchi J.I, Lee J.E. On slant curves in Sasakian
3-manifolds. Bulletin of the Australian mathematical society 2006; 74:
359-367.

\bibitem{ft} Fukunaga T, Takahashi M. Evolutes and involutes of frontals in
the Euclidean plane. Demonstratio mathematica 2015; 48: 1-20.

\bibitem{zl} Zhong H.H, Lei S. Slant Curves in the Unit Tangent Bundles of
Surfaces. Hindawi publishing corporation ISRN geometry 2013; 2013: 1-5.

\bibitem{b} Blair D.E. Riemannian Geometry of Contact and Symplectic
Manifolds, Birkh\"{a}user, Boston, Mass, USA, vol. 203 of Progress in
mathematics, 2002.

\bibitem{s} Sasaki S. On the differential geometry of tangent bundles of
Riemannian manifolds. The tohoku mathematical journal 1962; 14: 146-155.

\bibitem{hmy} F. Hathout, M. Bekar, Y. Yayli. N-legendre and N-slant curves
in the unit tangent bundle of surfaces, Kuwait journal of science. Paper
accepted, issue 2016/2017.

\bibitem{am} A.T. Ali, M. Turgut. Position vector of a time-like slant helix
in Minkowski 3-space, J. Math. Anal. Appl. 365 (2010) 559--569.

\bibitem{bo} B. O'Neill, Semi-Riemannian Geometry with Applications to
Relativity, Academic press, New York, 1982.

\bibitem{jv} Janssens D, Vanhecke L. Almost contact structures and curvature
tensors. Kodai mathematical journal 1981; 4: 1-27.

\bibitem{ko} K. Ilarslan, O. Boyac\i o\u{g}lu, Position vectors of a
timelike and a null helix in Minkowski 3-space, Chaos solitons fractals 38
(5) (2008) 1383--1389.

\bibitem{tt} T.Takahashi, Sasakian manifold with Riemannian metric, Tohoku
Math. Journ, 21(1969), 271-290.

\bibitem{y} Tashiro Y. On contact structure of hypersurfaces in complex
manifolds. The Tohoku mathematical journal 1963; 15: 62-78.
\end{thebibliography}
\end{document}